\def\xybiglabels{\def\labelstyle{\textstyle}}
\newtheorem{example}{Example}[section]}
\newtheorem{rem}[example]{Remark}}
\newtheorem{prop}[example]{Proposition}
\newtheorem{thm}[example]{Theorem}
\def\xybiglabels{\def\labelstyle{\textstyle}}
\def\Cok{\operatorname{Cok}}
\def\Ker{\operatorname{Ker}}
\def\Z{\mathbb{Z}}
\def\geq{\geqslant}
\newenvironment{proof}{\noindent {\bf Proof} }{\hfill $\Box$
}
\def\Crs{\mathsf{Crs}}
\def\CRS{\mathsf{CRS}}
\def\K{\mathbb{K}}
\newcommand{\labto}[1]{\xrightarrow{\labelstyle\textstyle #1}}
\newcommand{\xdirects}[2]{\def\objectstyle{\scriptstyle} \objectmargin={0pt}
\xy
(0,0)*+{}="a",(0,-6)*+{\rule{0em}{1.5ex}#2}="b",(7,0)*+{\;#1}="c"
\ar@{->} "a";"b" \ar @{->}"a";"c" \endxy }
\def\M{\mathcal M}
\def\cF{\mathcal F}
\def\d{\delta}
\begin{document}

\title{Crossed modules \\ and  the homotopy 2-type of a free loop space}
\author{Ronald Brown\thanks{School of Computer Science, University of Bangor, LL57 1UT,  Wales}} \maketitle
\begin{center}
  Bangor University  Maths Preprint 10.01
\end{center}

\begin{abstract}
The question was asked by Niranjan Ramachandran: how to describe the
fundamental groupoid of $LX$, the free loop space of a space $X$? We
show how this depends on the homotopy 2-type of $X$  by assuming $X$
to be the classifying space of a crossed module over a group, and
then describe completely a crossed module over a groupoid
determining the homotopy 2-type of $LX$;  that is we describe
crossed modules representing the 2-type of each component of $LX$.
The method requires detailed information on the monoidal closed
structure on the category of crossed
complexes.\footnote{MSClass:18D15,55Q05,55Q52; Keywords: free loop
space, crossed module, crossed complex, closed category, classifying
space, higher homotopies.}
\end{abstract}

\section{Introduction}
It is well known that for a connected $CW$-complex $X$ with
fundamental group $G$  the set of components of the free loop space
$LX$ of $X$ is bijective with the set of conjugacy classes of the
group $G$, and that the fundamental groups of $LX$ fit into a family
of exact sequences derived from the fibration $LX \to X$ obtained by
evaluation at the base point.

Our aim is to describe the homotopy 2-type of $LX$, the free loop
space on $X$,  when $X$ is a connected $CW$-complex, in terms of the
2-type of $X$. Weak homotopy 2-types are described by crossed
modules (over groupoids), defined in \cite{BH81:algcub} as follows.

A {\it crossed module} $\M$ is a morphism $\delta: M \to P$ of
groupoids which is the identity on objects such that $M$ is just a
disjoint union of groups $M(x), x \in P_0$, together with an action
of $P$ on $M$ written $(m,p) \mapsto m^p$, $m \in M(x), p:x \to y$
with $m^p \in M(y)$ satisfying the usual rules for an action. We
find it convenient to use (non-commutative) additive notation for
composition so if $p:x \to y, q:y \to z$ then $p+q: x \to z$, and
$(m+n)^p= m^p+n^p, (m^p)^q=m^{p+q}, m^0=m$. Further we have the two
crossed module rules for all $p \in P, m,n \in M$:
\begin{enumerate}[CM1)]
\item $\delta(m^p)= -p+ \delta m +p$;
\item $-n+m+n=m^{\delta n}$;
\end{enumerate}
whenever defined. This is a {\it crossed module of groups} if $P_0$
is a singleton.

A crossed module $\M$  as above has a simplicial nerve $K=N^\Delta
\M$ which in low dimensions is described as follows:
\begin{itemize}
  \item $K_0=P_0$;
  \item $K_1=P$;
  \item  $K_2$ consists of quadruples $\sigma=(m;c,a,b)$ where $m
 \in M, a,b,c \in P$ and $\delta m= -c+a+b$ is well defined;
 \item $K_3$ consists of quadruples
 $(\sigma_0,\sigma_1,\sigma_2,  \sigma_3)$ where $\sigma_i \in K_2$
 and the $\sigma_i$ make up the faces of a 3-simplex, as shown in the following diagrams:
\end{itemize}
$$\xybiglabels
 \vcenter{\xymatrix@R=3pc@C=3.5pc {& 3& \\
& 2 \ar[u] |(0.4)f \ar@{}[r]|(0.35){m_0}\ar@{}[l]|(0.35){m_1}& \\
0\ar [rr] |a \ar [ur]|{c} \ar@/^0.6pc/ [uur]^{d}
&\ar@{}[u]|(0.4){m_3}& 1 \ar @/_0.6pc/[uul]_{e} \ar [ul] |b
}}\hspace{6em}
\vcenter{\xymatrix@C=1.8pc@R=2.3pc{&\ar @{}[d]|(0.55){m_2}3& \\
0 \ar@/^0.5pc/ [ur] ^{d} \ar [rr] |a & & 1 \ar @/_0.5pc/ [lu] _e\\
\ar@{}[r]^{\delta m_2=-d+a+e}&}}$$ providing we have the rules
\begin{alignat*}{2}
\mu m_0&= -e+b+f,\quad  & \mu m_1&= -d+c+f,\\
\mu m_2&= -d+a+e, & \mu m_3&= -c+a+b,
\end{alignat*}
together with the rule
\begin{equation*}
(m_3)^f-m_0 -m_2+m_1=0.
\end{equation*}
You may like to verify that these rules are consistent.

A crossed module is the dimension 2 case of a {\it crossed complex},
the definition of which in the single vertex case goes back to
Blakers in \cite{BL48}, there called a `group system',   and in the
many vertex case is in \cite{BH81:algcub}. The definition of the
nerve of a crossed complex $C$ in the one vertex case is also in
\cite{BL48}, and in the general case is in \cite{As78,BH91}. An
alternative description of $K$ is that $K_n$ consists of the crossed
complex morphisms $\Pi \Delta^n_* \to \M$ where $\Pi \Delta^n_* $ is
the fundamental crossed complex of the $n$-simplex, with its
skeletal filtration, and $\M$ is also considered as a crossed
complex trivial in dimensions $>2$. This shows the analogy with the
Dold-Kan theorem for chain complexes and simplicial abelian groups,
\cite{Do58}.

We thus  define the {\it classifying space  $B \M$ of $\M$}  to be
the geometric realisation $|N^\Delta \M|$, a special case of the
definition in \cite{BH91}. It follows that an $a \in P(x)$ for some
$x \in P_0$ determines a $1$-simplex in $X=B \M$ which is a loop and
so a map $a': S^1 \to B \M$, i.e. $a' \in LX$.

The chief properties of $X=B\M$ are that $\pi_0(X) \cong \pi_0(P)$
and  for each $x \in P_0$
$$\pi_i(X,x) \cong \begin{cases}
  \Cok(\delta:M(x) \to P(x))  & \text{ if } i=1, \\
  \Ker(\delta :M(x) \to P(x)) & \text{ if } i=2, \\
  0 & \text{ if } i >2.
\end{cases}$$
Further if $Y$ is a  $CW$-complex, then there is a crossed module
$\M$ and a map $ Y \to B \M$ inducing isomorphisms of $\pi_0, \pi_1,
\pi_2$. For an exposition of some basic facts on crossed modules and
crossed complexes in relation to homotopy theory, see for example
\cite{Brown-grenoble}. There are other versions of the classifying
space, for example the cubical version given in \cite{BHS}, and one
for crossed module of groups using the equivalence of these with
groupoid objects in groups, see for example
\cite{Lod82,baez-stev-class-2-groups}. However the latter have not
been shown to lead  to the homotopy classification Theorem
\ref{thm:homclass} below.

Our main result is:
\begin{thm}\label{thm:combined}
Let  $\mathcal M$ be the crossed module of groups  $\delta: M \to P$
and let  $X=B\mathcal M$ be the classifying space of $\M$. Then the
components of $LX$, the free loop space on $X$,  are determined by
equivalence classes of elements $a \in P$ where $a,b$ are equivalent
if and only if there are elements $m \in M, p \in P $ such that
$$b= p + a + \delta m -p. $$ Further the homotopy $2$-type of a
component of $LX$ given by $a \in P$ is determined by the crossed
module of groups  $L\M [a]=(\delta_a: M \to P(a))$ where
\begin{enumerate}[\rm (i)]
\item $P(a)$ is the group of   elements $(m,p)\in M \times P$
such that $\delta m= [a,p]$, with composition $(n,q)+(m,p)=
(m+n^p,q+p)$;
\item $\delta_a(m)= ( -m^a + m,\delta m)$, for $m \in M$;
\item the action of $P(a)$ on $M$ is given by $n^{(m,p)}= n^p$ for $n \in M, (m,p) \in P(a)$.
\end{enumerate}
In particular $\pi_1(LX,a)$ is isomorphic to $\Cok \delta_a$, and
$\pi_2(LX,a) \cong \pi_2(X,*)^{\bar{a}}$, the elements of
$\pi_2(X,*)$ fixed under the action of $\bar{a}$, the class of $a$
in $G=\pi_1(X,*)$.
\end{thm}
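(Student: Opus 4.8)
The plan is to realise $LX$ as a mapping space and transport the computation into the algebraic category of crossed complexes. Since $LX = \mathrm{Map}(S^1,X)$ and $X = B\M$, I would invoke the exponential law relating the classifying space functor to the internal hom (the monoidal closed structure) on crossed complexes: for a $CW$-complex $K$ with its skeletal filtration there is a natural weak homotopy equivalence $\mathrm{Map}(K, B\mathcal C) \simeq B\,\CRS(\Pi K_*, \mathcal C)$ \cite{BHS}. Taking $K = S^1$ and $\mathcal C = \M$ reduces the whole theorem to an explicit computation of the internal hom crossed complex $\CRS(\Pi S^1_*, \M)$, since the homotopy groups of a component of the classifying space are read off from its crossed-module truncation via the chief properties recalled above. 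The first step is to identify $\Pi S^1_*$: with the one-cell structure on $S^1$ it is the free crossed complex on a single generator in dimension $1$, that is, the infinite cyclic group $\langle t\rangle$ regarded as a crossed complex with one object, trivial in dimensions $\neq 1$.

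With this model, a morphism $\Pi S^1_* \to \M$ is determined freely by the image $a \in P$ of $t$, so the $0$-cells of $\CRS(\Pi S^1_*, \M)$ are exactly the elements $a \in P$. Next I would unwind the definition of a (left) homotopy $\Pi S^1_* \otimes \I \to \M$, where $\I = \Pi I_*$ is the unit interval: because the source is free on one vertex and one $1$-cell, such a homotopy is encoded by a pair $(m,p) \in M \times P$, where $p$ is the value on the cylinder $1$-cell over the vertex and $m$ the $2$-dimensional value over $t$. Computing its two ends with the tensor and homotopy-addition formulas should yield that it runs from $a$ to $b$ precisely when $b = p + a + \delta m - p$; this identifies the path components of $\CRS(\Pi S^1_*, \M)$, and hence of $LX$, with the stated equivalence classes of elements of $P$. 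Specialising to self-homotopies of the morphism indexed by $a$ forces $\delta m = -a - p + a + p = [a,p]$, so the vertex group of the fundamental groupoid at $a$ consists of the pairs $(m,p)$ with $\delta m = [a,p]$; reading off the composite of two such homotopies from the homotopy-addition rule should reproduce the group law $(n,q)+(m,p) = (m + n^p, q+p)$, i.e.\ the group $P(a)$ of part (i).

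The dimension-$2$ part of $\CRS(\Pi S^1_*, \M)$ over the object $a$ is carried by the module $M$ of $\M$, and the remaining task is to compute the boundary $\delta_a \colon M \to P(a)$ and the action. Here I expect the main obstacle: one must handle the detailed sign and rotation conventions of the monoidal closed structure---the behaviour of $\Pi S^1_* \otimes \I$ and of $2$-fold homotopies---carefully enough to obtain $\delta_a(m) = (-m^a + m, \delta m)$ and the action $n^{(m,p)} = n^p$, and then to verify that $\delta_a$ together with this action satisfies CM1) and CM2), so that $L\M[a]$ is genuinely a crossed module. A useful consistency check is that $\delta\bigl(-m^a+m\bigr) = [a,\delta m]$, which places $\delta_a(m)$ in $P(a)$ as required.

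Finally, the homotopy groups follow formally. Applying the chief properties of the classifying space to the crossed module $L\M[a]$ gives $\pi_1(LX,a) \cong \Cok \delta_a$ directly. For $\pi_2$ one computes $\Ker \delta_a = \{\, m \in \Ker \delta : m^a = m \,\}$; since $\Ker \delta = \pi_2(X,*)$ and the $P$-action on it factors through $G = \Cok \delta = \pi_1(X,*)$, the condition $m^a = m$ is exactly invariance under $\bar a$, giving $\pi_2(LX,a) \cong \pi_2(X,*)^{\bar a}$.
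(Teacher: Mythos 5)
Your proposal is correct and follows essentially the same route as the paper: apply the classification theorem $B(\CRS(\Pi Y_*,C))\simeq (BC)^Y$ with $Y=S^1$, identify $\Pi S^1_*\cong\K(\Z,1)$ as the free crossed complex on one generator in dimension $1$, and read off the objects, homotopies, $2$-homotopies, boundary and action of $\CRS(\K(\Z,1),\M)$ from the explicit formulas for the internal hom, then restrict to the vertex crossed module at $a$. The computational step you flag as the main obstacle is exactly what the paper delegates to the published formulas for $\CRS$ and to the appendix verifying the crossed module axioms for $L\M[a]$.
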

We give a detailed proof that $L \M[a]$ is a crossed module in
Appendix \ref{app:proof}.

\begin{rem}
The composition in (i) can be seen geometrically in the following
diagram:
\begin{equation}
\xybiglabels \vcenter{\xymatrix@M=0pt@=3pc{\ar [d] _q \ar [r]^a \ar@{} [dr]|n & \ar [d]^q \\
\ar [r] |a \ar [d]_p \ar@{}[dr] |m & \ar [d]^p \\
\ar [r]_a& }}\quad  = \quad
\vcenter{\xymatrix@M=0pt@C=4pc@R=3pc{\ar[d]_{q+p}\ar[r] ^a
\ar@{}[dr]|{m+n^p}& \ar [d]^{q+p} \\
\ar [r]_a& }} \qquad \xdirects{2}{1}\tag*{$\Box$}\end{equation}
\end{rem}
The following examples are due to C.D. Wensley.
\begin{example}
 $\delta=0: M \to P$, so that $M$ is a $P$-module. Then $P(a)$ is
the set of $(m,p)$ s.t. $[a,p]=0$, i.e. $p \in C_a(P)$, and so is $M
\rtimes C_a(P)$.  ($P=G$ the fundamental group, as $\delta=0$). But
$\delta_a(m)=(-m^a+m,0)$. So $\pi_1(L\M,a) =(M/[a,M]) \rtimes
C_a(P)$. \hfill $\Box$
\end{example}
\begin{example}
If $a \in Z(P)$, the center of $P$, then $[a,p]=0$ for all $p$. (For
example, $P$ might be abelian.) Hence $P(a)= \pi \rtimes P$. Then
$\pi_1(L\M,a) = (\pi \rtimes P)/ \{(-m^a+m,\delta m)\mid m \in M\}.$

It is not clear to me that even in this case the exact sequence
splits. (??)\hfill $\Box$
\end{example}

It is also possible to give a less explicit description of
$\pi_1(LX,a)$ as part of an exact sequence:
\begin{thm}\label{thm:exact}
 Under the circumstances of Theorem \ref{thm:combined},
if we set $\pi = \Ker \delta = \pi_2(X), G= \Cok \delta= \pi_1(X)$,
with the standard module action of $G$ on $\pi$, then the
fundamental group $\pi_1(LX,a)$ in the component given by $a \in P$
is part of an exact sequence:
\begin{equation} 0 \to \pi^{\bar{a}} \to \pi \to \pi/\{\bar{a}\} \to \pi_1(LX, a) \to C_{\bar{a}}(G) \to 1
\end{equation}
where:  $\pi/\{\alpha\}$ denotes $\pi$ with the action of $\alpha$
killed;  and $C_\alpha(G)$ denotes the centraliser of the element
$\alpha \in G$.
\end{thm}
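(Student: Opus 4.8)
The plan is to derive Theorem~\ref{thm:exact} as a formal consequence of Theorem~\ref{thm:combined}, which already identifies $\pi_1(LX,a)$ with $\Cok\delta_a$ where $\delta_a\colon M\to P(a)$ is the crossed module describing the component. The whole statement is thus a matter of reorganising the groups $P(a)=\{(m,p)\in M\times P : \delta m=[a,p]\}$, $M$, and the map $\delta_a(m)=(-m^a+m,\delta m)$ into an exact sequence, so I would work entirely algebraically and not revisit the topology.

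First I would construct the two maps that close off the sequence on the right. Composing the projection $P(a)\to P$, $(m,p)\mapsto p$, with $P\to G=\Cok\delta$ gives a homomorphism $P(a)\to G$; its image lands in $C_{\bar a}(G)$ because $\delta m=[a,p]$ forces $[\bar a,\bar p]=1$ in $G$, and I would check surjectivity onto the centraliser by lifting any $p$ with $[\bar a,\bar p]=1$ to an element with $[a,p]=\delta m$ for some $m$. Passing to the cokernel, this induces $\pi_1(LX,a)=\Cok\delta_a\to C_{\bar a}(G)$; I would identify its kernel as the image of $\pi=\Ker\delta$, showing that the elements of $P(a)$ mapping to $1\in C_{\bar a}(G)$ are exactly those of the form $(m,\delta n)$ and reduce, modulo the image of $\delta_a$, to classes coming from $\pi$. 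On the left, I would introduce the map $\pi\to\pi_1(LX,a)$ and compute its kernel: the composite with $\delta_a$ sends $\pi$ into $P(a)$ via elements fixed or moved by the $a$-action, and the relation $\delta_a(m)=(-m^a+m,\delta m)$ shows that for $m\in\pi$ (so $\delta m=0$) we get $(-m^a+m,0)$, whence the kernel is precisely $\pi^{\bar a}$, the $\bar a$-fixed subgroup. This yields the segment $0\to\pi^{\bar a}\to\pi\to\pi/\{\bar a\}\to\pi_1(LX,a)$, where $\pi/\{\bar a\}$ is the cokernel of $1-\bar a$ acting on $\pi$, i.e.\ $\pi$ with the $\bar a$-action killed.

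The main step is verifying exactness at the middle term $\pi_1(LX,a)$: I must show that the image of $\pi/\{\bar a\}\to\pi_1(LX,a)$ coincides with the kernel of $\pi_1(LX,a)\to C_{\bar a}(G)$. This amounts to a diagram chase in $P(a)$ modulo $\mathrm{im}\,\delta_a$, tracking how the second coordinate $p$ controls the image in $G$ while the first coordinate $m$ carries the module information, and using the crossed module rules CM1), CM2) together with the defining relation $\delta m=[a,p]$ to rewrite products. I expect this to be the principal obstacle, since the bookkeeping of the twisted composition $(n,q)+(m,p)=(m+n^p,q+p)$ against the $\bar a$-action is exactly where the non-abelian interaction between $M$ and $P$ enters; once the image and kernel are matched here, exactness at the remaining spots ($\pi$ and $\pi/\{\bar a\}$) is the routine content of the kernel computations above. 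I would finish by confirming that all four maps assemble into the displayed five-term sequence and that the end terms are $0$ and $1$ as claimed.
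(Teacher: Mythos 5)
Your plan is workable, but it is a genuinely different route from the paper's. The paper obtains Theorem \ref{thm:exact} from Theorem \ref{thm:fibration}: since $i:\{z_0\}\to \K(\Z,1)$ is a cofibration, $\psi=\CRS(i,\M):L\M\to\M$ is a fibration of crossed modules over groupoids (citing \cite{BG89}); the fibre $\cF$ is computed explicitly ($\pi_2(\cF)=0$, $\pi_1(\cF,a)\cong\pi$, $\pi_0(\cF)\cong G$), and the displayed sequence is just the exact sequence of that fibration under the obvious identifications. This is the crossed-module shadow of the evaluation fibration $\Omega X\to LX\to X$: it delivers the boundary $\partial=1-\bar a$ and identifies $C_{\bar a}(G)$ as the stabiliser of $[a]\in\pi_0(\cF)$ without any computation inside $P(a)$. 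You instead stay entirely inside the algebra of $L\M[a]=(\delta_a:M\to P(a))$ from Theorem \ref{thm:combined} and build the maps by hand. The steps you outline do go through: $(m,p)\mapsto\bar p$ lands in and surjects onto $C_{\bar a}(G)$; an element of its kernel in $\Cok\delta_a$ has $p=\delta n$, and subtracting $\delta_a(n)$ reduces it to the class of some $(m',0)$ with $m'\in\pi$, which gives exactness at $\pi_1(LX,a)$; and $n\mapsto[(n,0)]$ has kernel $\{-m^a+m: m\in\pi\}$, so it factors injectively through $\pi/\{\bar a\}$. The paper's method buys conceptual economy and reuses general machinery; yours buys explicitness and independence from the theory of fibrations of crossed complexes, at the cost of the bookkeeping you acknowledge.

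One genuine wobble to fix: you claim the kernel of $\pi\to\pi_1(LX,a)$ ``is precisely $\pi^{\bar a}$.'' It is not; as your own computation of $\delta_a$ on $\Ker\delta$ shows, that kernel is the image of $1-\bar a$ acting on $\pi$, which is exactly why the map factors through $\pi/\{\bar a\}$. The group $\pi^{\bar a}$ arises as the kernel of $1-\bar a:\pi\to\pi$ itself, i.e.\ as $\Ker\delta_a=\pi_2(LX,a)$. So the left-hand end must be assembled as the exact piece $0\to\pi^{\bar a}\to\pi\labto{1-\bar a}\pi$ spliced with the injective factorisation of $\pi\to\pi_1(LX,a)$ through $\pi/\{\bar a\}$; conflating the two kernels as written would make exactness at the second term $\pi$ fail. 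This is the same reading one must give the paper's displayed sequence, but your write-up should state it explicitly.
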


The proof of Theorem \ref{thm:combined}, which  will be given in
Section 2, is essentially an exercise in the use of the following
classification theorem \cite[Theorem A]{BH91}:
\begin{thm}\label{thm:homclass}
Let $Y$ be a $CW$-complex with its skeletal filtration $Y_*$ and let
$C$ be a crossed complex, with its classifying space written $BC$.
Then there is a natural weak homotopy equivalence
$$ B(\CRS(\Pi Y_*,C)) \to (BC)^Y. $$
  \end{thm}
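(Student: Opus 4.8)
The plan is to exhibit both sides as geometric realisations of simplicial sets that are \emph{isomorphic}, so that the entire content is concentrated in one geometric input --- a product (Eilenberg--Zilber) theorem for the functor $\Pi$ --- while everything else is adjunction bookkeeping plus the standard comparison of mapping spaces. First I would reduce to the case $Y=|K|$ for a simplicial set $K$, say $K=SY$ the singular complex, so that $|K|\to Y$ is a homotopy equivalence of $CW$-complexes. Both sides are invariant under this replacement: $(BC)^{(-)}$ preserves homotopy equivalences because $BC=|N^\Delta C|$ has the homotopy type of a $CW$-complex, and $B\CRS(\Pi(-)_*,C)$ does so because $\Pi$ carries a cellular homotopy equivalence to a homotopy equivalence of crossed complexes while $B$ and the internal hom $\CRS(-,C)$ preserve such equivalences. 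The basic tool is then the adjunction
$$\Simp(K,\,N^\Delta C)\ \cong\ \CRS(\Pi|K|_*,\,C),$$
which holds for $K=\Delta^n$ by the very definition $(N^\Delta C)_n=\CRS(\Pi\Delta^n_*,C)$, and in general because every $K$ is a colimit of its simplices, $\Pi$ sends this colimit of skeletal filtrations to the corresponding colimit of crossed complexes, and $\CRS(-,C)$ turns colimits into limits.

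Next I would compute the simplicial mapping space $\mathrm{Map}(K,N^\Delta C)$, whose $n$-simplices form the set $\Simp(K\times\Delta^n,\,N^\Delta C)$. Running this through the adjunction, then using that realisation preserves products, $|K\times\Delta^n|\cong|K|\times|\Delta^n|$, the Eilenberg--Zilber isomorphism $\Pi(|K|\times\Delta^n)_*\cong\Pi|K|_*\otimes\Pi\Delta^n_*$ for the product skeletal filtration, and finally the exponential law of the symmetric monoidal closed structure on crossed complexes, I obtain natural bijections
$$\Simp(K\times\Delta^n,\,N^\Delta C)\ \cong\ \CRS\bigl(\Pi\Delta^n_*,\,\CRS(\Pi Y_*,C)\bigr)\ =\ \bigl(N^\Delta\CRS(\Pi Y_*,C)\bigr)_n.$$
Every arrow here is natural in $[n]$, hence compatible with faces and degeneracies, so these bijections assemble into an \emph{isomorphism of simplicial sets} $\mathrm{Map}(K,N^\Delta C)\cong N^\Delta\CRS(\Pi Y_*,C)$. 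Realising, and invoking the standard fact that, because $N^\Delta C$ is a Kan complex, $|\mathrm{Map}(K,N^\Delta C)|$ is weakly equivalent to $(|N^\Delta C|)^{|K|}=(BC)^Y$, gives the asserted weak equivalence; a short check identifies it with the natural evaluation map arising from the counit $\Pi Y_*\otimes\CRS(\Pi Y_*,C)\to C$.

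The main obstacle is the Eilenberg--Zilber isomorphism $\Pi(X\times Z)_*\cong\Pi X_*\otimes\Pi Z_*$, and this is exactly what forces the passage through cubical theory advertised in the introduction, since the tensor product of crossed complexes is defined and controlled there. I would prove it on the cubical side, where the fundamental $\omega$-groupoid $\rho$ of a filtered space and the tensor product are both modelled geometrically by products of cubes, so that $\rho(X_*\otimes Z_*)\cong\rho X_*\otimes\rho Z_*$ is an honest isomorphism of cubical $\omega$-groupoids with connections; transporting along the equivalence between crossed complexes and cubical $\omega$-groupoids delivers the crossed-complex statement and, incidentally, the colimit-preservation of $\Pi$ used above. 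The residual work is routine: the Kan condition on $N^\Delta C$ and the simplicial-versus-topological mapping-space comparison are standard. It is because this geometric core is substantial that in the present paper the result is quoted from \cite{BH91} and used as a black box.
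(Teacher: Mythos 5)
The paper offers no proof of this statement: it is imported verbatim as Theorem A of \cite{BH91} and used as a black box (the text explicitly frames the proof of Theorem \ref{thm:combined} as ``an exercise in the use of'' this result). So there is no internal argument to compare against; your sketch has to be judged against the cited source, and it does in fact reproduce the architecture of the proof given there: reduce to a simplicial model of $Y$, use the adjunction between $N^\Delta$ and $K\mapsto \Pi|K|_*$, identify $\mathrm{Map}(K,N^\Delta C)$ with $N^\Delta\CRS(\Pi Y_*,C)$ via the Eilenberg--Zilber isomorphism $\Pi(X\times Z)_*\cong \Pi X_*\otimes\Pi Z_*$ together with the exponential law, and close with the Kan-complex comparison of simplicial and topological mapping spaces ($N^\Delta C$ is indeed Kan -- it is even a $T$-complex). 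You also correctly identify the Eilenberg--Zilber step as the geometric core and as the reason the theory is developed cubically.

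Two points deserve sharper treatment than your sketch gives them. First, the ``adjunction'' $\Simp(K,N^\Delta C)\cong\Crs(\Pi|K|_*,C)$ conceals a second input of the same depth as Eilenberg--Zilber: the identification of $\Pi|K|_*$ with $\colim_{\Delta^n\downarrow K}\Pi\Delta^n_*$ is the Brown--Higgins higher homotopy van Kampen theorem, a substantial deformation argument, and it does not fall out ``incidentally'' from the equivalence of crossed complexes with cubical $\omega$-groupoids -- that equivalence only transports the statement, it does not prove it. Second, in your reduction step the counit $|SY|\to Y$ is not cellular (a singular simplex need not land in the corresponding skeleton), so a cellular approximation is required before $\Pi$ can be applied to it, and naturality of the resulting weak equivalence in $Y$ is cleanest if, as you indicate only at the end, one constructs the map directly from the counit $\Pi Y_*\otimes\CRS(\Pi Y_*,C)\to C$ rather than through the zig-zag of replacements. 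These are omissions of emphasis in a sketch rather than errors; the route you describe is the one actually taken in \cite{BH91}.
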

In the statement of this theorem we use the internal hom $\CRS(-,-)$
in the category $\Crs$ of crossed complexes: this  internal hom is
described explicitly in \cite{BH87}, in order to set up the
exponential law
$$\Crs(A \otimes B,C) \cong \Crs(A,\CRS(B,C))$$ for crossed
complexes $A,B,C$, i.e. to give a monoidal closed structure on the
category $\Crs$. Note that $\CRS(B,C)_0= \Crs(B,C)$, $\CRS(B,C)_1$
gives the homotopies of morphisms, and $\CRS(B,C)_n$ for $n \geq 2$
gives the higher homotopies.

\section{Proofs}

We deduce Theorem \ref{thm:combined} from the following Theorem.

\begin{thm}\label{thm:mainthm}
Let $X=B\mathcal M$, where $\mathcal M$ is the crossed module of
groups  $\delta: M \to P$. Then  the homotopy $2$-type of $LX$, the
free loop space of $X$, is described by the crossed module over
groupoids $L \mathcal M$  where \begin{enumerate}[\rm (i)]
  \item $(L \mathcal M)_0 = P$;
  \item $(L \mathcal M)_1= M \times
  P \times P$ with source and target given by
$$s(m,p,a)= p+a+\delta m -p, \quad  t(m,p,a)= a$$
for $a,p\in P, m \in M$;
\item the composition of such triples is given by
$$ (n,q,b)+(m,p,a) =( m+n^p,q+p,a)$$
which of course is defined under the condition that \begin{align*}
b&=p+a+\delta m -p
\end{align*}
or, equivalently, $ b^p= a +\delta m $;
\item if $a \in P$ then $(L\mathcal M)_2(a)$ consists of pairs
$(m,a)$ for all $m \in M$, with addition and boundary
$$(m,a)+(n,a)=(m+n,a), \qquad \delta (m,a)=(-m^a +m,\delta m, a);$$
\item the action of $(L\mathcal M)_1$ on $(L\mathcal M)_2$ is given
by: $(n,b)^{(m,p,a)}$ is defined if and only if $b^p=a+\delta m $
and then its value is $(n^p,a)$.
  \end{enumerate}
\end{thm}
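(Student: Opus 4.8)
The plan is to obtain Theorem \ref{thm:mainthm} as a direct application of the homotopy classification theorem, Theorem \ref{thm:homclass}, specialised to $Y = S^1$ and $C = \mathcal{M}$ (the latter regarded as a crossed complex trivial in dimensions $>2$, as in the Introduction). First I would equip $S^1$ with its minimal $CW$-structure, one $0$-cell and one $1$-cell, so that the fundamental crossed complex $\Pi S^1_*$ is the infinite cyclic group $\mathbb{Z}$ viewed as a one-object crossed complex concentrated in dimension $1$. Since $(B\mathcal{M})^{S^1} = LX$, Theorem \ref{thm:homclass} then supplies a natural weak homotopy equivalence
$$B\big(\CRS(\Pi S^1_*, \mathcal{M})\big) \to LX.$$
Because $\Pi S^1_*$ is trivial above dimension $1$ and $\mathcal{M}$ is trivial above dimension $2$, the internal hom $\CRS(\Pi S^1_*, \mathcal{M})$ is itself trivial above dimension $2$, hence is a crossed module over a groupoid. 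The whole problem thus reduces to computing this internal hom explicitly and identifying it with $L\mathcal{M}$.

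The computation I would carry out dimension by dimension, using the explicit description of $\CRS(-,-)$ from \cite{BH87} together with the exponential law $\Crs(A \otimes B, C) \cong \Crs(A, \CRS(B,C))$. In dimension $0$, a morphism $\Pi S^1_* \to \mathcal{M}$ is determined by the image $a \in P$ of the generator, giving $\CRS_0 = P$, which is (i). In dimension $1$ an element is a homotopy, equivalently a morphism $\mathcal{I} \otimes \Pi S^1_* \to \mathcal{M}$ with $\mathcal{I} = \Pi I_*$; since $\mathcal{I} \otimes \Pi S^1_*$ is the fundamental crossed complex of the cylinder $I \times S^1$, such a morphism is pinned down by the images of the two boundary loops, of the track $\iota \otimes *$, and of the square $\iota \otimes z$. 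This yields the data $(m,p,a)$ with $m \in M$, $p,a \in P$, and the boundary relation of the square forces the source loop to equal $p + a + \delta m - p$ while the target loop is $a$, giving (ii). The groupoid composition (iii) then comes from stacking cylinders: the tracks compose to $q+p$, the target is unchanged, and the two square-fillers combine to $m + n^p$, the action $n^p$ arising precisely because the second filler must be transported along the first track. A similar analysis in dimension $2$ produces a copy of $M$ over each $a \in P$, with the addition and the boundary $\delta(m,a) = (-m^a + m, \delta m, a)$ of (iv); one checks directly, using CM1, that this boundary has both source and target equal to $a$, so it lands in the expected vertex group $(L\mathcal{M})_1(a,a)$. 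The action (v) is read off the same tensor computation.

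The main obstacle I anticipate is controlling the combinatorics of the crossed-complex tensor product $\mathcal{I} \otimes \Pi S^1_*$ and its two-dimensional analogue, and in particular getting every sign and every instance of the $P$-action exactly right. The boundary formula for the generating square $\iota \otimes z$, and the twisting by the action that appears in the composition rule $(n,q,b) + (m,p,a) = (m + n^p, q + p, a)$, are where the delicate and error-prone bookkeeping lives; these are the features of \cite{BH87} that must be applied with care. A useful consistency check throughout is the source computation above for $\delta(m,a)$, which collapses to $a$ exactly when CM1 is used.

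Once the five pieces of data have been matched, it remains only to record that the resulting $(\delta_a,\text{action})$ do satisfy the crossed module axioms CM1 and CM2 (the verification of which, for a single component, is the content of Appendix \ref{app:proof}), and to observe that the weak equivalence $B(\CRS(\Pi S^1_*, \mathcal{M})) \to LX$ therefore identifies the homotopy $2$-type of each component of $LX$ with the corresponding component of $BL\mathcal{M}$. This completes the identification of $L\mathcal{M}$ as the crossed module over groupoids describing the homotopy $2$-type of $LX$.
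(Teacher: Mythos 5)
Your proposal is correct and follows essentially the same route as the paper: apply Theorem \ref{thm:homclass} with $Y=S^1$, identify $\Pi S^1_*$ with the free crossed complex on one generator in dimension $1$, and compute $L\mathcal{M}=\CRS(\Pi S^1_*,\mathcal{M})$ dimension by dimension from the explicit formulae for the internal hom in \cite{BH87}. Your phrasing of the dimension-$1$ elements via morphisms $\mathcal{I}\otimes\Pi S^1_*\to\mathcal{M}$ is just the adjoint form of the homotopy formulae the paper quotes directly, so the two arguments coincide in substance.
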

\begin{proof}
In Theorem \ref{thm:homclass} we set $Y=S^1$ with its standard cell
structure $e^0 \cup e^1$, and can write $\Pi Y_* \cong \K(\Z,1)$
where the latter is the crossed complex with a base point $z_0$ and
a free generator $z$ in dimension $1$, and otherwise trivial. Thus
morphisms of crossed complexes from $\K(\Z,1)$,  and homotopies and
higher homotopies of such morphisms, are completely determined by
their values on $z_0$ and on $z$.

A crossed module over a group or groupoid is also regarded as a
crossed complex trivial in dimensions $> 2$.

All the formulae required to prove Theorem \ref{thm:mainthm} follow
from those  for the internal hom $\CRS$ on the category $\Crs$ given
in \cite[Proposition 3.14]{BH87} or \cite[\S 7.1.vii, \S 9.3]{BHS}.

We set $L\M= \CRS(\K(\Z,1),\M)$.

Since $\K(\Z,1)$ is a free crossed complex with one generator $z$ in
dimension 1, the elements $a \in P$ are bijective with the morphisms
$f:\K(\Z,1) \to \mathcal M$, and we write this bijection as $a
\mapsto \hat{a}$, where $a=\hat{a}(z)$. Also the homotopies and
higher homotopies from $\K(\Z,1) \to \mathcal M$ are determined by
their values on $z$ and on the element $z_0$ of $\K(\Z,1)$ in
dimension 0. Thus a 1-homotopy $(h,\hat{a}):\hat{b}\simeq \hat{a}$
is such that $h$ lifts dimension by 1, and  is given by elements
$p=h(z_0)\in P, m=h(z) \in M$ and so $(h,\hat{a})$  is given by a
triple $(p,m,a)$. The condition that this triple  gives a homotopy
$\hat{b} \simeq \hat{a}$ translates to
$$ b=p+a+\delta m-p$$
or, equivalently, $ a+\delta m= b^p$. It follows  easily that
$\hat{b},\hat{a}$ belong to the same component of $L\M$ if and only
if $b,a$ give conjugate elements in the quotient group
$\pi_1({\mathcal M})$. (The use of such  general homotopies was
initiated in \cite{W49:CHII}.)

The composition of such homotopies $\hat{c} \simeq \hat{b}\simeq
\hat{a}$ is given by:
$$(n,q,b) + (m,p,a)= (m+n^p,q+p,a)$$
which of course is defined if and only if
$$b^p=a+\delta m.$$

A 2-homotopy $(H,\hat{a})$ of $\hat{a}$ is such that $H$ lifts
dimension by 2 and so is given by an element $H(z_0) \in M$. There
are rules giving the composition, actions, and boundaries of such 1-
and 2-homotopies.

In particular  the action of a 1-homotopy $(h,f^+):f^- \simeq f^+$
on a 2-homotopy $(H,f^-)$ gives a 2-homotopy $(H^h,f^+)$ where
$H^h(c)=H(c)^{h(tc)}$. Here we take $c=z_0$ so that we obtain the
action $(n,b)^{(m,p,a)}=n^p$.

All these formulae follow from those given in  \cite[Proposition
3.14]{BH87} or \cite[\S 9.3]{BHS}.

A 2-homotopy $(H,\hat{a})$ is given by $a=\hat{a}(z)$ and
$m=H(z_0)\in M$. We then have to work out $\delta_2(H)$. We find
that
\begin{align*}
 \delta_2(H)(x)&= \begin{cases}
  \delta H(z_0) & \text{ if } x=z_0,\\
  -H(sz)^{\hat{a}(z)} + H(tz) + \delta H(z) & \text{ if } x=z,
\end{cases}\\
& = \begin{cases}  \delta  m & \text{ if } x=z_0,\\
                   -m^a +m & \text{ if }  x=z.
\end{cases} \end{align*}

This completes the proof of Theorem \ref{thm:mainthm}.
\end{proof}
\medskip

The proof of  Theorem \ref{thm:combined} now follows by restricting
the crossed module of groupoids given in Theorem \ref{thm:mainthm}
to  $L\M(a)$, the crossed module of groups over the object $a \in
(L\M)_1=P$. Then we have an isomorphism $\theta: L\M(a) \to L\M[a]$
given by $\theta_0(a)=*, \; \theta_1(m,p,a)=(m,p), \;
\theta_2(m,a)=m$.

For the next result we need the notion of fibration of crossed
modules of groupoids which is a special case of fibrations of
crossed complexes as defined in \cite{How79} and applied in
\cite{brown-homclass}.

\begin{thm}\label{thm:fibration}
In the situation of Theorem \ref{thm:mainthm}, there is a fibration
$L\M \to \M$ of crossed modules of groupoids.  Hence if $$\pi=
\pi_2(X)\cong \Ker \delta,\quad  G = \pi_1(X)\cong \Cok \delta$$
then for each $a \in P$ there is an exact sequence
\begin{equation} 0 \to \pi^{\bar{a}} \to \pi \to \pi/\{\bar{a}\} \to \pi_1(LX, a') \to C_{\bar{a}}(G) \to 1
\end{equation}
where: $\bar{a}$ denotes the image of $a'$ in $G$; $\pi/\{\alpha\}$
denotes $\pi$ with the action of $\alpha$ killed;  and $C_\alpha(G)$
denotes the centraliser of the element $\alpha \in G$.
\end{thm}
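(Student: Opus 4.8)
The plan is to realise the stated exact sequence as the homotopy exact sequence of the asserted fibration, so the work splits into producing the fibration and then computing its fibre together with the relevant connecting maps. Everything can be done algebraically, since by Theorem \ref{thm:combined} the crossed-complex homotopy groups of $L\M$ and $\M$ agree with $\pi_*(LX,a')$ and $\pi_*(X,*)$.

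First I would construct the morphism $\mathrm{ev}\colon L\M \to \M$ as restriction along the inclusion of the base point $z_0\colon * \hookrightarrow \K(\Z,1)$, which under $L\M=\CRS(\K(\Z,1),\M)$ is evaluation at $z_0$. Reading off the formulas of Theorem \ref{thm:mainthm}, this sends an object $a\in (L\M)_0=P$ to the unique object $*$ of $\M$, a $1$-cell $(m,p,a)$ to $p\in P=\M_1$, and a $2$-cell $(m,a)$ to $m\in M=\M_2$. That these assignments respect source, target, composition, boundary and action is immediate from the same formulas; for instance $\mathrm{ev}$ of $(n,q,b)+(m,p,a)=(m+n^p,q+p,a)$ is $q+p$, and $\mathrm{ev}$ of $\delta(m,a)=(-m^a+m,\delta m,a)$ is $\delta m$. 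To see that $\mathrm{ev}$ is a fibration of crossed complexes in the sense of \cite{How79} I would check the two lifting conditions: star-surjectivity in dimension $1$, which holds since any $p\in P$ admits the lift $(0,p,a)$ with target $a$; and surjectivity of $(L\M)_2(a)\to \M_2$ in dimension $\geq 2$, which is clear since $(m,a)\mapsto m$ is onto $M$.

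Next I would identify the fibre $F=\mathrm{ev}^{-1}(*)$ and its homotopy. Here $F_0=P$, $F_1=\{(m,0,a)\}$ with $s(m,0,a)=a+\delta m$ and $t(m,0,a)=a$, while $F_2$ is trivial. Hence $a$ and $a+\delta m$ lie in one component, so $\pi_0 F=P/\delta M=\Cok\delta=G$; the vertex group $\pi_1(F,a)$ consists of the loops $(m,0,a)$ with $\delta m=0$ under addition in $M$, so $\pi_1(F,a)=\Ker\delta=\pi$; and $\pi_n F=0$ for $n\geq 2$. Thus $F$ is the algebraic analogue of $\Omega X$. I would also record the monodromy: transporting the object $a$ of $F$ along a $1$-cell of $L\M$ lying over $p\in P$ (namely $(m,p,a)$, with source $p+a+\delta m-p$) changes its class in $\pi_0 F=G$ from $\bar a$ to $\bar p+\bar a-\bar p$, a conjugate of $\bar a$. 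So the action of $G=\pi_1(\M)$ on $\pi_0 F=G$ is by conjugation, the orbit of $\bar a$ is its conjugacy class, and the stabiliser is the centraliser $C_{\bar a}(G)$.

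Finally I would feed this into the homotopy exact sequence of a fibration of crossed complexes (\cite{How79,brown-homclass}), based at $a$. With $\pi_2 F=0$, $\pi_2(\M)=\pi$, $\pi_1(F,a)=\pi$, $\pi_2(L\M,a)=\Ker\delta_a$, $\pi_1(L\M,a)=\Cok\delta_a=\pi_1(LX,a')$, $\pi_1(\M)=G$ and $\pi_0 F=G$, the sequence reads
$$0 \to \pi_2(LX,a') \to \pi \xrightarrow{\ \partial\ } \pi \to \pi_1(LX,a') \to C_{\bar a}(G) \to 1,$$
the surjection onto $C_{\bar a}(G)$ coming from the conjugation monodromy above. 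The crux, and the step I expect to be the main obstacle, is the explicit identification of the connecting homomorphism $\partial\colon \pi_2(\M)=\pi \to \pi_1(F,a)=\pi$. I would obtain it from the $2$-dimensional boundary computed in the proof of Theorem \ref{thm:mainthm}, namely $\delta_2(H)(z)=-m^a+m$, which gives $\partial=1-\bar a$ (so $\partial(x)=-x^a+x$); in particular $\ker\partial=\pi^{\bar a}$, recovering $\pi_2(LX,a')\cong\pi^{\bar a}$, while $\operatorname{im}\partial=(1-\bar a)\pi$ is the kernel of $\pi\to\pi_1(LX,a')$, so the latter factors through an injection $\pi/\{\bar a\}=\Cok(1-\bar a)\hookrightarrow\pi_1(LX,a')$. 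Splicing these identifications into the displayed sequence yields the asserted
$$0 \to \pi^{\bar a} \to \pi \to \pi/\{\bar a\} \to \pi_1(LX,a') \to C_{\bar a}(G) \to 1.$$
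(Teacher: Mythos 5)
Your proposal is correct and follows essentially the same route as the paper: the evaluation morphism induced by the base-point inclusion $\{z_0\}\hookrightarrow \K(\Z,1)$, the computation of its fibre ($\cF_0=P$, $\cF_1$ the triples with $p=0$, $\cF_2$ trivial), and the homotopy exact sequence of the resulting fibration of crossed modules. The only difference is one of detail: the paper deduces the fibration property from the fact that the inclusion is a cofibration (citing the model structure of Brown--Golasi\'nski) and leaves the identification of $\partial=1-\bar a$ and of the monodromy action as ``obvious identifications,'' whereas you verify the lifting conditions and compute these maps explicitly --- a useful elaboration, not a different proof.
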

\begin{proof}
We  define the fibration $\psi: L \M \to \M$ by the inclusion
$i:\{z_0\} \to \K(\Z,1)$ and the identification $\CRS(\{z_0\}, \M)
\cong \M$, where here $\{z_0\}$ denotes also the trivial crossed
complex on the point $z_0$. Then $\psi$ is a fibration since  $i$ is
a cofibration, see \cite{BG89}. The exact description of $\psi$ in
terms given earlier is that
\begin{align*}
\psi_0 (a)&= *, \quad a \in P,\\
\psi_1(m,p,a)&= p, \quad (m,p,a) \in M \times P\times P, \\
\psi_2(n,a) &= n, \quad (n,a) \in M \times P.
\end{align*}
To say that $\psi$ is a fibration of crossed modules over groupoids
is to say that: (i) it is a morphism; (ii) $(\psi_1, \psi_0)$ is a
fibration of groupoids, \cite{B70,anderson-fibrations};  and (iii)
$\psi_2$ is piecewise surjective.

Let $\cF$ denote the fibre of $\psi$. Then $$\cF_0=P, \quad
\cF_1=\{0\}\times M \times P,\quad  \cF_2= \{0\}\times P.$$ The
exact sequence of the fibration for a given base point $a \in
\cF_0=P$ is
\begin{multline*}
  0 \to \pi_2(\cF,a) \to \pi_2(L \M, a) \to \pi_2(\M,*) \labto{\, \partial \,} \\
  \to \pi_1(\cF,a) \to \pi_1(L \M, a) \to \pi_1(\M,*)
  \labto{\, \partial\, } \pi_0(\cF) \to \pi_0 (L\M) \to *.
\end{multline*}
Under the obvious identifications, this leads to the exact sequence
of Theorem \ref{thm:exact}.
\end{proof}
\begin{rem}
These results and methods should be related to the description in
\cite[\S 6]{B-87} of the homotopy type of the function space
$(BG)^Y$ where  $G$ is an abstract group and $Y$ is a $CW$-complex,
and which gives a result of Gottlieb in \cite{Got}. \hfill $\Box$
\end{rem}

\begin{rem}
Here is a methodological point. The category $\Crs$ of crossed
complexes is equivalent to that of $\infty$-groupoids, as in
\cite{BH81:inf}, where these $\infty$-groupoids  are now commonly
called `strict globular $\omega$-groupoids'. However the internal
hom in the latter category is bound to be more complicated than that
for crossed complexes, because the cell structure of the standard
$n$-globe, $n >1,$
$$E^n = e^0_\pm \cup e^1_\pm \cup \cdots \cup e^{n-1}_\pm \cup e^n$$
is more complicated than that for the standard cell for which
$$E^n=e^0 \cup e^{n-1}\cup e^n, n>1.$$
Also we obtain a precise answer using filtered spaces and strict
structures, whereas the current fashion is to go for weak structures
as yielding more homotopy $n$-types for $n>2$. In fact many  results
on crossed complexes are obtained using cubical methods.   \hfill
$\Box$
\end{rem}
\section*{Appendix: Verification of crossed module rules}
\label{app:proof} We now verify  the crossed module rules for the
structure $$L\M[a]= (M \labto{\d _a} P(a))$$ defined in Theorem
\ref{thm:combined} from a crossed module of groups $\M= (M \labto{\d
} P)$ and $a\in P$ as follows:
\begin{align*}P(a)&=\{(m,p)\in M \times P\mid \delta m= -[a,p]=-a-p+a+p\};\\
\delta _a m& = (-m^a+m, \delta m);\\
(n,q)+(m,p)&=(m+n^p,q+p);\\
n^{(m,\,p)}&=n^p.
\end{align*}

\begin{prop}If $\delta:M \to P$ is a crossed module of groups, and $a \in P$, then
$L \M[a]$ as defined above is also a crossed module of groups.
\end{prop}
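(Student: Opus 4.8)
The plan is to verify directly that the data $L\M[a]=(M\xrightarrow{\delta_a}P(a))$ satisfies all the axioms of a crossed module of groups. This breaks into four tasks: (1) confirm that $P(a)$ is a group under the given composition; (2) confirm that $M$ is a group and that $P(a)$ acts on $M$ by the rule $n^{(m,p)}=n^p$, checking the action axioms; (3) verify that $\delta_a$ is a group homomorphism; and (4) verify the two crossed module rules CM1 and CM2. Throughout I would exploit that $\M=(M\xrightarrow{\delta}P)$ is already a crossed module, so CM1 and CM2 for $\M$ are available, as are the action laws $(m+n)^p=m^p+n^p$, $(m^p)^q=m^{p+q}$, $m^0=m$.

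\medskip
First I would establish that $P(a)$ is a group. Closure requires showing that if $\delta m=-[a,p]$ and $\delta n=-[a,q]$ then $(m+n^p,q+p)$ again lies in $P(a)$, i.e.\ $\delta(m+n^p)=-[a,q+p]$; this is a computation using CM1 to expand $\delta(n^p)=-p+\delta n+p$ and then combining the commutator terms. Associativity and the identity $(0,0)$ are routine, and one should record the inverse, which will be $(-m^{-p},-p)$ or similar and can be read off from the closure formula. Then I would check the action axioms: that $(n,q)\mapsto$ the rule $n^{(m,p)}=n^p$ respects the group structure of $P(a)$, meaning $n^{(m,p)+(m',p')}$ (using the $P(a)$-composition) equals $(n^{(m',p')})^{(m,p)}$, which reduces via $(n^{p'})^{p}=n^{p'+p}$ to matching the second coordinate of the composite. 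Since the action only sees the $P$-component $p$, these verifications are essentially inherited from the $P$-action on $M$.

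\medskip
Next I would verify that $\delta_a(m)=(-m^a+m,\delta m)$ is a homomorphism $M\to P(a)$. This has two parts: that the image actually lands in $P(a)$, i.e.\ $\delta(-m^a+m)=-[a,\delta m]$, which follows from CM1 applied to $m^a$; and that $\delta_a(m+m')=\delta_a(m)+\delta_a(m')$ in the sense of the $P(a)$-composition, where the nontrivial check is on the first coordinate and uses the twisted additivity of $m\mapsto -m^a+m$ together with $(m')^{\delta m}$ appearing from the composition formula, absorbed via CM2. Finally, for CM1 in $L\M[a]$ I would compute $\delta_a(n^{(m,p)})=\delta_a(n^p)$ and compare it with $-(m,p)+\delta_a(n)+(m,p)$ in $P(a)$, and for CM2 I would check $-(n,0)+(m,0)+(n,0)=m^{\delta_a n}=m^{(\ldots,\delta n)}=m^{\delta n}$, reducing it to CM2 for $\M$.

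\medskip
\textbf{The main obstacle} I expect is the first-coordinate bookkeeping, particularly in the homomorphism property of $\delta_a$ and in CM1, because the nonabelian $P(a)$-composition $(n,q)+(m,p)=(m+n^p,q+p)$ mixes a twist $n^p$ into the $M$-coordinate, and one must repeatedly apply CM1 and CM2 for $\M$ to convert commutator and conjugation terms into the required form. Since the second coordinates always reduce to statements about $\M$ that are already known, the whole verification hinges on carefully tracking these $M$-valued first coordinates; I would organise the computation so that each CM1/CM2 application for $\M$ is made explicit, rather than attempting to collapse several steps at once.
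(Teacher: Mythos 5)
Your proposal takes essentially the same route as the paper: a direct verification of the crossed module axioms for $L\M[a]$ using CM1 and CM2 for $\M$, and your four steps match (and in fact extend) the paper's written proof, which only records that $\delta_a$ lands in $P(a)$, that $\delta_a$ is a homomorphism, and the two crossed module rules, leaving the group structure of $P(a)$ and the action axioms as routine. One small caution on the action axiom: with the composition $(n,q)+(m,p)=(m+n^p,q+p)$ and the paper's convention $(m^p)^q=m^{p+q}$, the identity to check is $(n^{(m,p)})^{(m',p')}=n^{(m,p)+(m',p')}$, i.e. $(n^p)^{p'}=n^{p+p'}$, not the reversed order you wrote, which would give $n^{p'+p}$ and need not match the second coordinate $p+p'$ of the composite when $P$ is nonabelian.
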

\noindent {\bf Proof} It is easy to check that $\delta (-m^a+m)=
[a,\d m]$, so that $\delta_a(m) \in P(a)$.

 We next show that $\d _a$ is a morphism:
\begin{align*}
  \delta_a(n) + \delta_a(m)&=(-n^a+n,\delta n)+ (-m^a+m,\delta m)\\
  &= (-m^a +m +(-n^a+n)^{\delta m}, \delta n + \delta m) \\
  &= (-m^a -n^a +n +m, \d n + \d m) \\
  &= \d _a (n+m).\\
\intertext{Now we verify the first crossed module rule. Let $(m,p)
\in P(a), n \in M$:}
  -(m,p) + \d _a n + (m,p)&= (-m^{-p},-p) +(-n^a+n,\d n)+ (m,p) \\
  &= (-n^a +n +(-m^{-p})^{\d n}, -p +\d n)+(m,p) \\
  &= (-n^a -m^{-p} + n,-p + \d n)+ (m,p) \\
  &= (m+(-n^a-m^{-p}+n)^p,-p +\d n +p)\\
  &= (m-n^{a+p}-m+n^p,\d (n^p))\\
  &= (-n^{a+p-\d m} +n^p ,\d (n^p))\\
  &= (-n^{p+a} +n^p,\d (n^p))\tag*{since $\delta m= [a,p]$}\\
  &= \d_a (n^p).\\
\intertext{Now we verify the second crossed module rule:}
  m^{\d _a n}&= m^{(-n^a+n,\,\delta n)}\\
  &= m^{\d n}\\
  &= -n+m+n. \tag*{$\Box$}
\end{align*}

In effect, this illustrates  that verifying the crossed complex
rules for the internal hom $\CRS(C,D)$  is possible but tedious, and
that is it is easier to say it follows from the general construction
in terms of $\omega$-groupoids and the equivalence of categories, as
in \cite{BH87}. On the other hand, this direct proof  `proves', in
the old sense of `tests', the general theory.

\end{document}